\definecolor{rltred}{rgb}{0.75,0,0}
\definecolor{rltgreen}{rgb}{0,0.5,0}
\definecolor{rltblue}{rgb}{0,0,0.75}
      \theoremstyle{plain}
      \newtheorem{theorem}{Theorem}[section]
      \newtheorem{remark}[theorem]{Remark}
      \newtheorem{claim}[theorem]{Claim}
      \newtheorem{definition}[theorem]{Definition}
      \newcommand{\R}{{\mathbb R}}
      \newcommand{\B}{\mathcal{B}}
      \newcommand{\Z}{\mathbb{Z}}
      \newcommand{\N}{\mathbb{N}}
      \newcommand{\MM}{\mathcal{M}}
\begin{document}

\title{Escape rate for special semi-flows over non-invertible subshifts of finite type}

\author{Italo Cipriano}
\date{September 2015}

\maketitle

\begin{abstract}
We prove an escape rate result for special semi-flows over non-invertible subshifts of finite type. Our proofs are based on a discretisation of the flow and an application of an escape rate result for conformal repellers.
\end{abstract}
  
  \section{Introduction}\label{ERLD_intro}

Suppose that we have a measure preserving dynamical system. If we consider a subset of the phase space, we know that the orbit of almost every point enters it. A natural object to study in this case is the measure of the points that have not entered this subset up to a time $n\in\N.$  It is natural to think in some classical examples of uniformly hyperbolic smooth dynamical systems that this measure will decrease exponentially as $n$ increases. The escape rate through a subset of the phase space corresponds to the asymptotic rate between $n$ and the logarithm of the measure of the points that have not entered our subset up to time $n.$  Once one has understood the escape rate of a set, one may wonder how the escape rates of sets whose measure converge to zero and the measure of the sets itself are asymptotically related? The answer is that for some uniformly hyperbolic smooth dynamical systems and some particular probability measures (Gibbs measures for example) one can explicitly describe this asymptotic behaviour in the special case of the map $2x \pmod{1}$ with the Lebesgue measure \cite{Bunimovich_Yurchenko_2011}, and more generally, for conformal repellers and Gibbs measures \cite{FP}. The question that motivated this paper is: can we say something similar for smooth flows? A general answer is out of the scope of this paper, however, we will show that it is possible to obtain results for special semi-flows over non-invertible subshifts of finite type analogous to that for Gibbs measures in discrete dynamical systems.

Being more concise, suppose $\Lambda$ is a  set of possible states (a compact metric space) that evolves in time according to the transformations $\Phi^t:\Lambda\to \Lambda,$ $t\in \R^{>0}.$ If we know the state of the system at time zero, say $x\in \Lambda,$ then at time $t$ it is $\Phi^t(x).$ To be consistent we need that $\Phi^{t+s}(x)=\Phi^s(\Phi^t(x))$ for any $s,t\in \R^{>0}.$ This defines a flow $\{\Phi^t\}.$ We assume that we have an Ergodic probability measure $\nu$ on $\Lambda,$ that is an invariant probability measure in which invariant sets have either null or full measure, so that in particular $(\Lambda,\B_{\Lambda},\nu,\Phi^{t})$ is a measure preserving dynamical system, where $\B_{\Lambda}$ is the Borel algebra on $\Lambda.$ For an open set $\mathcal{H}\subset \Lambda$ and $t\in\R^{>0},$ we define  $$
 K(\nu,t,\mathcal H, \Lambda):=\log \nu \{x\in\Lambda: \Phi^sx\notin \mathcal H, s\in [0,t]\}
 $$
 and the escape rate through $\mathcal{H}$ by
 $$
 R(\nu,\mathcal H, \Lambda):=-\limsup_{t\to\infty}\frac{1}{t}K(\nu,t,\mathcal H, \Lambda).
 $$
The limit $\lim_{\nu(\mathcal{H})\to 0}\frac{R(\nu,\mathcal H, \Lambda)}{\nu(\mathcal H)}$ quantifies the asymptotic rate of the measure of the points that have not entered a subset $\mathcal{H}\subset \Lambda,$ with respect to the measure of $\mathcal{H},$ when the measure of $\mathcal{H}$ is small. Escape rates for discrete uniformly hyperbolic dynamical system are studied in \cite{Bunimovich_Yurchenko_2011}, \cite{FP} and in the references therein. In this case we have a measure preserving dynamical system $(\mathcal X,\B_{\mathcal X},\mu,T)$ and define for an open set $\mathcal{H}\subset \mathcal X, k\in\N$
 $$
 K_{\text{Discrete}}(\mu,k,\mathcal H, \mathcal X):=\log \mu \{x\in\mathcal X: T^ix\notin \mathcal H, i\in \{0,1,\ldots, k-1\} \}
 $$
 and the escape rate through $\mathcal{H}$ by
 $$
 R_{\text{Discrete}}(\mu,\mathcal H, \mathcal X):=-\limsup_{k\to\infty}\frac{1}{k}K_{\text{Discrete}}(\mu,k,\mathcal H, \mathcal X).
 $$
 In \cite{FP} they consider a discrete dynamical system $(\mathcal{X},T),$ where $(\mathcal{X},T)$ a non-invertible subshift of finite type or a conformal repeller, so in particular uniformly hyperbolic. For an equilibrium state (or Gibbs measure in this case) $\mu$ on $\mathcal{X}$ of H\"older potential $\varphi$ and pressure $P(\varphi)\in\R^{\geq 0},$ Theorem 5.1 in \cite{FP} proves that for shrinking sequences  $\{\mathcal I_n\},\mathcal I_n\subset \mathcal X$ satisfying the nested condition (Definition \ref{nested condition}) with $\cap_{n\in\N}\mathcal I_n=\{z\},z\in \mathcal X$ 
\begin{equation}\label{10_6_2015_caso_discreto}
\lim_{n\to\infty} \frac{R_{\text{Discrete}}(\mu,\mathcal{I}_n,\mathcal X)}{\mu(\mathcal{I}_n)}=\gamma  (z),
\end{equation}
where $\gamma:\mathcal{X}\to [0,1]$ is defined for $x\in\mathcal X$ by
\[
\gamma(x):= \begin{cases}
1 & \mbox{ if }x\mbox{ is not periodic},\\
1-\exp{\left(\sum_{k=0}^{p-1}\varphi(T^{k}x)-p P(\varphi)\right)} & \mbox{ if }x\mbox{ has prime period }p.\end{cases}\]
 
A special semi-flow $(\Lambda, \Phi^t)$ over a discrete and uniformly expansive dynamical system $(\mathcal{X},T),$ corresponds to the semi-flow in which every point in $\Lambda$ moves with unit speed along along the non-contracting and non-expanding direction until it reaches the boundary of $\Lambda$ and it jumps according $T.$ That is, 
for a continuous function $f:\mathcal{X}\to\R^{>0},$ we consider the continuous action $\Phi^t=\Phi_{f}^{t}$ 
  on
  \[
  \Lambda=\Lambda_{f}:=\{(x,t):x\in\mathcal X,0\leq t<f(x)\}
  \]
 onto itself defined by  
\[ 
     \Phi_{f}^{t}(x,s):= \left(T^{m}x,s+t- \sum_{k=0}^{m-1} f(T^k x)\right)\mbox{ for } \sum_{k=0}^{m-1} f(T^k x)\leq s+t<\sum_{k=0}^{m} f(T^k x),
\]
where $m\in \Z^{\geq 0}.$
The main result of this paper stablishes an analog of (\ref{10_6_2015_caso_discreto}) for a special semi-flow $(\Lambda, \Phi^t)$ over a discrete dynamical system $(\mathcal{X},T)$ and a probability measure $\mu$ on $\mathcal{X},$ where $(\mathcal{X},T)$ is a non-invertible subshift of finite type and $\mu$ is an equilibrium state of H\"older potential. We consider the invariant probability measure 
$$
\nu=\mu^f:=\frac{\mu\times Leb}{\int f d\mu}
$$
on $\Lambda,$ where $Leb$ is the Lebesgue measure on $\R.$ The goal of this paper is to prove the following theorem.

\begin{theorem}\label{theoremA}
If the roof function $f:\mathcal{X}\to \R^{>1}$ is Lipschitz, and $\{\mathcal{I}_n\},  \mathcal{I}_n\subset \mathcal{X}$ satisfies the nested condition (Definition \ref{nested condition}) with $\cap_{n\in\N}\mathcal{I}_n=\{z\}$ for $z\in\mathcal{X}.$ Then  
$$
\lim_{n\to\infty}\frac{R(\nu,\mathcal{I}_n\times \{0\},\Lambda)}{\nu(\mathcal{I}_{n}\times [0,1])}=\gamma (z).
$$
\end{theorem}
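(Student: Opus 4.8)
The plan is to reduce, via a discretisation of the flow, to the discrete escape rate result of \cite{FP}. Write $S_k f(x):=\sum_{i=0}^{k-1}f(T^i x)$ for the Birkhoff sums of the roof. From the explicit formula for $\Phi^t_f$, the orbit of a point $(x,s)\in\Lambda$ meets the base $\mathcal{X}\times\{0\}$ exactly at the points $(T^k x,0)$, $k\ge 1$, and at the flow-time $S_k f(x)-s$; hence, writing $\Lambda_n(t):=\{(x,s)\in\Lambda:\Phi^r_f(x,s)\notin\mathcal{I}_n\times\{0\}\text{ for all }r\in[0,t]\}$, one has $\Lambda_n(t)=\{(x,s)\in\Lambda: T^k x\notin\mathcal{I}_n\text{ for every }k\ge 1\text{ with }S_k f(x)\le s+t\}$ up to the $(\mu\times Leb)$-null set $\{s=0\}$, and the escape rate $R(\nu,\mathcal{I}_n\times\{0\},\Lambda)$ is the exponential decay rate as $t\to\infty$ of $\nu(\Lambda_n(t))=\frac{1}{\int f\,d\mu}\int_{\mathcal{X}}Leb\{s\in[0,f(x)):(x,s)\in\Lambda_n(t)\}\,d\mu(x)$. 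The difficulty is that the dictionary ``flow-time $t$'' $\leftrightarrow$ ``$k$ base crossings'' involves $S_k f$, so a naive use of $1<\inf f\le f\le\|f\|_\infty$ only yields $R_{\mathrm{Discrete}}(\mu,\mathcal{I}_n,\mathcal{X})/\|f\|_\infty\le R(\nu,\mathcal{I}_n\times\{0\},\Lambda)\le R_{\mathrm{Discrete}}(\mu,\mathcal{I}_n,\mathcal{X})$, which is not sharp.

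To obtain the sharp rate, fix $\delta>0$ and replace $f$ by a roof $f^{(\delta)}$ that is locally constant, takes values in $\delta\mathbb{Z}$, and satisfies $\|f-f^{(\delta)}\|_\infty\le\delta$ (so $f^{(\delta)}>1$ for small $\delta$). Because $f^{(\delta)}$ is a multiple of $\delta$ everywhere, the time-$\delta$ map of the suspension over $f^{(\delta)}$ preserves the grid $G^{(\delta)}:=\{(x,j\delta):x\in\mathcal{X},\ 0\le j\delta<f^{(\delta)}(x)\}$, and $(G^{(\delta)},\Phi^\delta_{f^{(\delta)}})$ is conjugate to a non-invertible subshift of finite type -- the tower over $(\mathcal{X},T)$ with height function $f^{(\delta)}/\delta$. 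Let $\mu^{(\delta)}_G$ be the $\Phi^\delta_{f^{(\delta)}}$-invariant Gibbs measure on $G^{(\delta)}$ that lifts $\mu$; by the Abramov formula its potential has the pressure normalisation for which the corresponding $\gamma$-function evaluated at $(z,0)$ equals $\gamma(z)$ exactly -- the tower-period of $(z,0)$ is $S_p f^{(\delta)}(z)/\delta$ when $z$ has prime period $p$, and summing the lifted potential around this orbit recovers $\sum_{k=0}^{p-1}\varphi(T^k z)-pP(\varphi)$. Theorem 5.1 of \cite{FP}, applied to this subshift with the holes $\mathcal{I}_n\times\{0\}$ (nested and shrinking to $(z,0)$), therefore gives $\lim_{n\to\infty}R^{G^{(\delta)}}_{\mathrm{Discrete}}(\mu^{(\delta)}_G,\mathcal{I}_n\times\{0\},G^{(\delta)})/\mu^{(\delta)}_G(\mathcal{I}_n\times\{0\})=\gamma(z)$. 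Passing from the discrete escape rate of the time-$\delta$ map to the escape rate of the continuous flow over $f^{(\delta)}$ with the measure $\nu^{(\delta)}:=\mu^{f^{(\delta)}}$ introduces a factor $\delta$ (one discrete step is $\delta$ units of flow-time, and $\nu^{(\delta)}$ and $\mu^{(\delta)}_G$ assign asymptotically equal mass to the surviving sets, whose $s$-fibres are full), while $\mu^{(\delta)}_G(\mathcal{I}_n\times\{0\})=\delta\,\nu^{(\delta)}(\mathcal{I}_n\times[0,1])$; the two factors of $\delta$ cancel, and one obtains, for every $\delta>0$,
$$
\lim_{n\to\infty}\frac{R(\nu^{(\delta)},\mathcal{I}_n\times\{0\},\Lambda_{f^{(\delta)}})}{\nu^{(\delta)}(\mathcal{I}_n\times[0,1])}=\gamma(z).
$$

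It remains to let $\delta\to 0$. Since $f>1$ we have $\mathcal{I}_n\times[0,1]\subset\Lambda$ and $\nu(\mathcal{I}_n\times[0,1])=\mu(\mathcal{I}_n)/\int f\,d\mu$, and likewise for each $f^{(\delta)}$, with $\int f^{(\delta)}\,d\mu\to\int f\,d\mu$. The remaining point is to show that the quotient for $f$ differs from that for $f^{(\delta)}$ by a quantity tending to $0$ with $\delta$ \emph{uniformly in $n$}; granting this, an interchange-of-limits argument promotes the last display to the statement of the theorem. This uniform comparison -- bounding $R(\nu^{(\delta)},\mathcal{I}_n\times\{0\},\Lambda_{f^{(\delta)}})-R(\nu,\mathcal{I}_n\times\{0\},\Lambda)$ relative to $\mu(\mathcal{I}_n)$ as $\delta\to 0$ -- is the main obstacle; it is where the uniform spectral gap of the transfer operators with a hole (available because Lipschitz potentials have summable variations, and $\|f-f^{(\delta)}\|_\infty\le\delta$) is needed. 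Everything else is a matter of unwinding definitions, Abramov/tower bookkeeping, and quoting \cite{FP}.
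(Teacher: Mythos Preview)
Your overall strategy---approximate $f$ by a locally constant roof with values in $\delta\mathbb{Z}$, discretise the resulting suspension as a tower subshift of finite type, apply Ferguson--Pollicott there, then let $\delta\to 0$---is exactly the paper's. Where you diverge is in the comparison step, and that is where you leave a genuine gap.

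You use a \emph{single} approximation $f^{(\delta)}$ and then need $R(\nu,\mathcal{I}_n\times\{0\},\Lambda_f)-R(\nu^{(\delta)},\mathcal{I}_n\times\{0\},\Lambda_{f^{(\delta)}})=o(\mu(\mathcal{I}_n))$ uniformly in $n$. You label this ``the main obstacle'' and invoke a uniform spectral gap for transfer operators with holes, but give no argument. This is not routine: you would have to control how the leading eigenvalue of the holed transfer operator moves under a perturbation of the roof, \emph{simultaneously} with the hole shrinking, to the required order. Nothing in the proposal indicates how to carry this out.

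The paper sidesteps this entirely by a sandwich. Instead of one approximation it takes \emph{two}: an upper $\overline{f}=\overline{f}_{m,\delta}$ and a lower $\underline{f}=\underline{f}_{m,\delta}$, both constant on $m$-cylinders, $\delta\mathbb{Z}$-valued, with $\underline{f}\le f\le\overline{f}$ and $\|f-\overline{f}\|_\infty,\|f-\underline{f}\|_\infty\le 2\delta+\eta(m)$. For ordered roof functions the survival sets are nested---with a smaller roof one crosses the base at least as often---so elementary set inclusions give, for every $t$ and $n$,
\[
K(\mu^{\underline{f}},t,\mathcal{I}_n\times\{0\},\Lambda_{\underline{f}})+\log\tfrac{1}{2}\le K(\mu^f,t,\mathcal{I}_n\times\{0\},\Lambda_f)\le K(\mu^{\overline{f}},t,\mathcal{I}_n\times\{0\},\Lambda_{\overline{f}})+\log 2,
\]
the $\log 2$ coming only from $\tfrac{1}{2}\int f\,d\mu\le\int\underline{f}\,d\mu$ and $\int\overline{f}\,d\mu\le 2\int f\,d\mu$. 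Combined with the discrete result for the $\overline{f}$- and $\underline{f}$-towers this yields
\[
\frac{\gamma(z)}{\int\overline{f}\,d\mu}\;\le\;\liminf_{n}\frac{R(\nu,\mathcal{I}_n\times\{0\},\Lambda)}{\mu(\mathcal{I}_n)}\;\le\;\limsup_{n}\frac{R(\nu,\mathcal{I}_n\times\{0\},\Lambda)}{\mu(\mathcal{I}_n)}\;\le\;\frac{\gamma(z)}{\int\underline{f}\,d\mu},
\]
and letting $\delta\to 0$, $m\to\infty$ squeezes both sides to $\gamma(z)/\int f\,d\mu=\gamma(z)\cdot\nu(\mathcal{I}_n\times[0,1])/\mu(\mathcal{I}_n)$. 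No spectral perturbation theory is needed; your ``main obstacle'' disappears once you bracket $f$ by two approximations rather than approximate it by one.
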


The proof follows from a discretisation of the flow and an application of (\ref{10_6_2015_caso_discreto}). This strategy was proposed by Mark Pollicott.\\

Special flows have been important in the study of hyperbolic flows. It seems natural that further generalization of our theorem will give similar results for hyperbolic flows. 
   
\section{Background}
  
We will formally introduce the definition of subshift of finite type. For notational convenience let us first denote $\langle i, j\rangle:=\{i,i+1,\ldots,j\}\subset \Z,$ for $i\in\Z, j\in\Z^{\geq i}.$ Let $A$ denote an irreducible and aperiodic $a\times a$ matrix of zeros and ones ($a\in\Z^{\geq 2}$), i.e. there exists $d\in\N$ for which $A^{d}>0$ (all coordinates of $A^d$ are strictly positive). We call the matrix $A$ transition matrix. We define the non-invertible subshift of finite type (usually called non-invertible topologically mixing subshift of finite type) $\mathcal{X}=\mathcal{X}_A \subset \langle 1,a\rangle^{\Z^{\geq 0}}$ such that
  $$\mathcal{X}:=\{(x_n)_{n=0}^{\infty}:A(x_n,x_{n+1})=1\mbox{ for all }n\in \Z^{\geq 0}\}.$$ 
  On $\mathcal{X},$ the shift $\sigma:\mathcal{X}\to \mathcal{X}$ is defined by $\sigma(x)_{n}=x_{n+1}$ for all $n\in \Z^{\geq 0}.$ For $x\in \mathcal{X}$ and $n\in \N,$ we define the cylinder $$[x]_n:=\{y\in\mathcal{X}: y_i=x_i \mbox{ for } i\in\langle 0, n-1\rangle\},$$ 
      we denote by $\xi_n$ the set of all the cylinders $[x]_n$ with $x\in\mathcal{X}$ and we call by $\B_{\mathcal{X}}$ the sigma-algebra generated by the closed sets of $\mathcal{X}$ (Borel algebra on $\mathcal{X}$). Denote by $\MM_{\sigma}$ the space of $\sigma$ invariant probability measure on $\mathcal{X},$ that is the space of probability measurs $\mu$ on $\mathcal{X}$ so that $\mu(\mathcal{A})=\mu(\sigma^{-1}\mathcal{A})$ for every measurable set $\mathcal{A}.$ For $\theta \in (0,1),$ we consider the metric on $\mathcal{X}$ given by $d_{\theta}(x,y)=\theta^{m},$ where $m=\inf\{n\in \N: x_n\neq y_n\}$ and $d(x,x)=0$ for every $x\in \mathcal{X}.$ Here $(\mathcal{X}, d_{\theta})$ is a complete metric space. We say that $f:\mathcal{X}\to\R$ is continuous if it is continuous with respect to $d_{\theta}.$ Given $f:\mathcal{X}\to\R$ continuous and $n \in \N$ define 
 \[
 V_n(f):= \sup_{z\in \mathcal{X}}\{|f(x)-f(y)|:x,y\in [z]_n\},
 \]
the Lipschitz semi-norm
\[
|f|_{\theta}:=\sup\left\{ \frac{V_n(f)}{\theta^n}:n \in\N\right\}
\]    
and the Lipschitz norm
\[
\left\Vert f\right\Vert_{\theta}:= |f|_{\theta}+\| f \|_{\infty},
\]
where $\| f \|_{\infty}:=\sup_{x\in \mathcal{X}} \{|f(x)|\}.$

The space of continuous functions with finite Lipschitz norm is called the space of Lipschitz functions (or $\theta$-Lipschitz functions) and denoted by $\mathcal{F}.$ A continuous function is $\alpha$-H\"older for $d_{\theta}$ if and only if it is Lipschitz for $d_{\theta^{\alpha}}.$

We denote the space of invariant probability measures for $(\mathcal X,\sigma)$  by $\mathcal M_{\mathcal X}.$ Given a H\"older potential $\varphi\in \mathcal{F},$ there is unique probability measure $\mu\in \mathcal M_{\mathcal X}$ that achieves the supremum 
$$\sup \{h_{\mu}(T)+\int_{\mathcal X} \varphi d\mu :\mu\in\mathcal M_{\mathcal X} \},$$
where $h_{\mu}(T)$ is the measure theoretic (or Sinai) entropy.
This probability measure $\mu$ is called equilibrium state and satisfies the Gibbs property, i.e. there exist $c_1,c_2\in\R^{>0}$ and $P\in\R^{\geq0}$ such that for every $n\in\N$
\begin{equation}\label{23_09_2015}
c_1\leq \frac{\mu([x]_n)}{\exp\left( -Pn+\sum_{i=0}^{n-1}\varphi(\sigma^i x) \right)}\leq c_2.
\end{equation}
Moreover, $\mu$ is an equilibrium state if and only if $\mu$ satisfies (\ref{23_09_2015}) for certain $c_1,c_2$ (see \cite{Rufus}), and so equilibrium states and Gibbs measures coincide.

We end this section of background with a necessary condition from \cite{FP}.
   
\begin{definition}[Nested condition]\label{nested condition}   
  We say that a family of open sets $\{\mathcal{I}_n\},\mathcal{I}_n\subset \mathcal{X}$  satisfies the \emph{nested condition} if it satisfies that:
  \begin{enumerate}
  \item each $\mathcal{I}_n$ consists of a finite union of cylinder sets, with each cylinder having length $n;$
  \item $\mathcal{I}_{n+1}\subset \mathcal{I}_n$ for every $n\in \N$ and $\cap_{n\in\N}\mathcal{I}_n=\{z\}$ for some $z\in \mathcal{X};$
  \item there exist constants $c\in \R^{>0}$ and $0<\rho<1$ such that $\mu(\mathcal{I}_n)\leq c\rho^{n}$ for all $n\in \N;$
  \item there is a sequence $\{l_n\}\subset \N$ and a constant $\kappa \in\R^{>0}$ such that $\kappa<l_n/n\leq 1$ and $\mathcal{I}_n\subset [z]_{l_n}$ for all $n\in \N;$
  \item if $\sigma^{p}(z)=z$ has prime period $p,$ then $\sigma^{-p}(\mathcal{I}_n)\cap [z]_p\subset \mathcal{I}_n$ for large enough $n.$
  \end{enumerate}
\end{definition}

\section{Proof of the Theorem}

We start this section with an easy observation.

\begin{remark}\label{rem_24_9_2015}
Given a $\theta$-Lipschitz function $f:\mathcal{X}\to \R^{>0},$ there exists $\eta:\N\to \R^{>0}$ converging to $0$ such that 
 \begin{equation}\label{opt_2_16_april_2015}
 \max \left\{ \sup_{x\in [y]_m} f(x)-\inf_{x\in [y]_m} f(x): y\in \mathcal{X} \right\} <\eta (m)
 \end{equation}
 for all $m\in \N.$ Moreover, $\eta(m)=|f|_{\theta}\theta^{m}$ for $m\in\N.$
\end{remark}

 We now proceed to the proof of Theorem \ref{theoremA}. 
       
  \begin{proof}
  We can find $\epsilon\in \R^{>0}$ such that $f>\epsilon.$ Once fixed $\epsilon,$ we can choose $\delta\in (0,\epsilon/3)$ and $m\in \N$ such that 
  \begin{equation}\label{eq1_24_09_2015}
  2\delta+\eta(m)<0.5\int fd\mu.
  \end{equation}
   This is justified by the fact that $f$ is $\theta$-Lipschitz and we can use Remark \ref{rem_24_9_2015}. We will require condition (\ref{eq1_24_09_2015}) in inequality (\ref{ineq1.2}).  
  We define an approximation of $f$ from above by
  \[ \overline{f}_{m,\delta}(x):=\left( \left[\sup_{y\in [x]_m} f(y)/\delta \right] +2 \right)\delta,\]
   and an approximation of $f$ from below by
  \[ \underline{f}_{m,\delta}(x):=\left( \left[\inf_{y\in [x]_m} f(y)/\delta \right] -2 \right)\delta.\]
  To make the notation shorter we denote $\overline{f}=\overline{f}_{m,\delta}$ and $\underline{f}=\underline{f}_{m,\delta}.$
  We consider the special flows $(\Lambda_{\overline{f}}, \Phi_{\overline{f}}^t)$ and $(\Lambda_{\underline{f}}, \Phi_{\underline{f}}^t).$ We can discretise them by considering $(\Lambda_{\overline{f}}, \Phi_{\overline{f}}^{k\delta})$ and $(\Lambda_{\underline{f}}, \Phi_{\underline{f}}^{k\delta}),$ where $k\in \Z^{\geq 0}.$ We can associate a non-invertible subshift of finite type to each discrete flow by doing the following. Define
  \[
  \mathcal{X}_{\overline{f}}:=\{(y_i)_{i=0}^{\infty}: y_i=\Phi_{\overline{f}}^{k\delta}([x]_m),x\in\mathcal{X},k\in\N,A_{\overline{f}}(y_i,y_{i+1})=1\},
  \]
  where 
\[
A_{\overline{f}}\left(\Phi_{\overline{f}}^{k\delta}([x]_{m}),\Phi_{\overline{f}}^{(k'+1)\delta}([x']_{m})\right)=
\begin{cases}
1 & \mbox{ if } C_1 \mbox{ or } C_2,\\
0 & \mbox{ if not, }\end{cases}\]
and
$$
\begin{aligned} 
& C_{1}\Leftrightarrow k=k'\mbox{ \& }x\in[x']_{m},\\
& C_{2}\Leftrightarrow\begin{cases}
(k+1)\delta=\overline{f}(x) \mbox{ \&} \\
(k'+1)\delta=\overline{f}(x') \mbox{ \&} \\
x_{i+1}=x'_i \mbox{ for all }i\in \langle 0,m-2\rangle,\end{cases}
\end{aligned}
$$
with the shift $\sigma_{\overline{f}}:\mathcal{X}_{\overline{f}}\to\mathcal{X}_{\overline{f}},$ $\sigma_{\overline{f}}:=\Phi_{\overline{f}}^{\delta}.$  We denote $\Phi_{\overline{f}}^{k\delta}([x]_m)$ by $([x]_m,k \mbox{ mod } \overline{f}([x]_m)/\delta).$

\begin{figure}[!]
    \centering
\fbox{
    \includegraphics[scale=0.4]{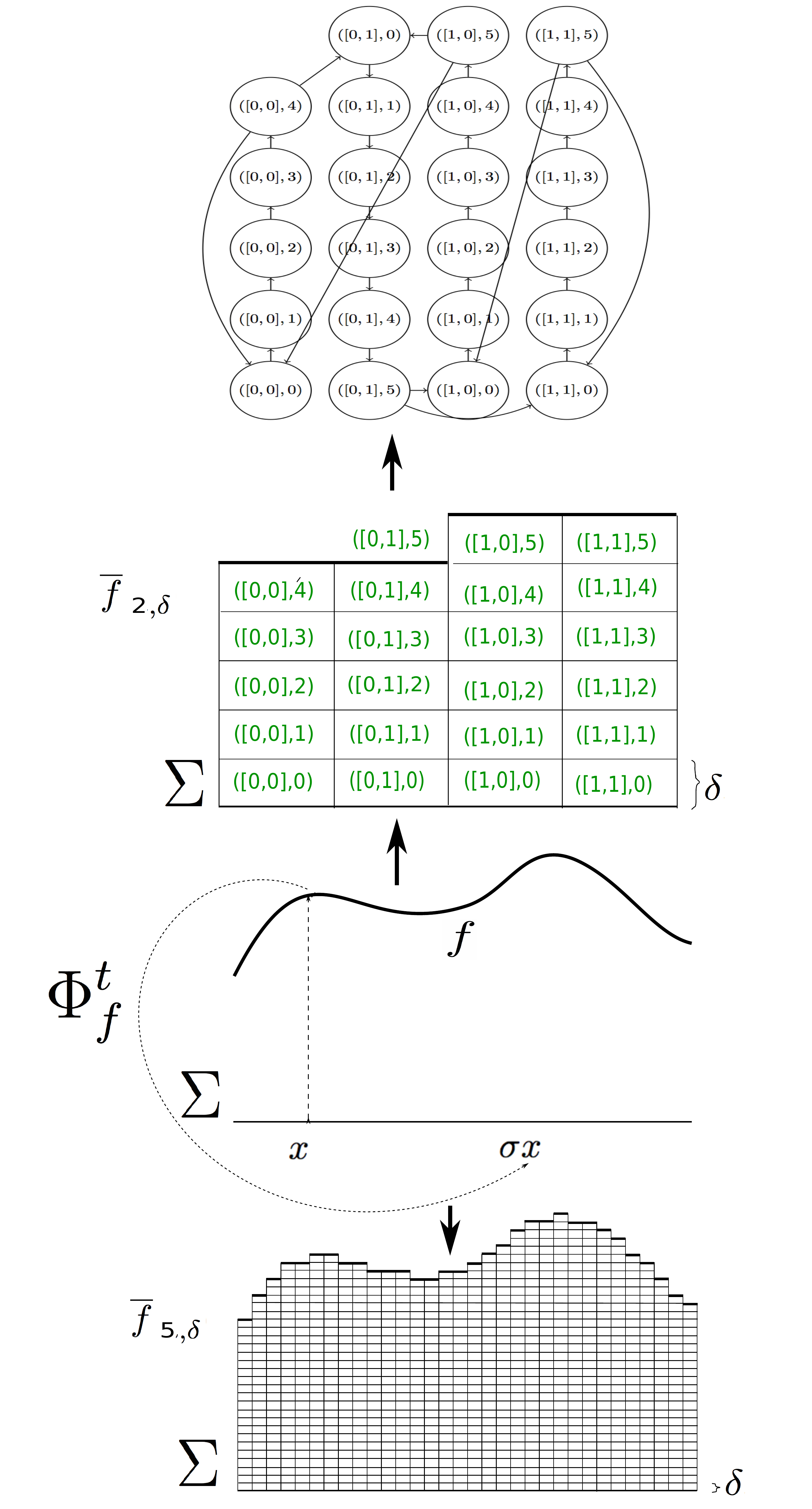}
} 
    \caption{Example of our discretisation of the flow for $\mathcal{X}=\{1,2\}^{\Z^{\geq 0}},$ where $[i,j]$ are the cylinders of length $2$ for $i,j\in\{1,2\}.$}
\end{figure}

Given a set $\mathcal{N}\subset \Z^{\geq 0}$ and $\mathcal{W}\in \xi_m$ define $$d\N|_{[0,\overline{f}(\mathcal{W})/\delta)}(\mathcal{N})=| \{n\in \mathcal{N}: n<\overline{f}(\mathcal{W})/\delta\} |,$$
the measure
\begin{equation}\label{measure_name_27_april_2015}
\tilde{\mu}^{\overline{f}}:=\frac{1}{\int \overline{f} d\mu} \sum_{\mathcal{W}\in \xi_m} \mu|_{\mathcal{W}}
\times \delta  d\N|_{[0,\overline{f}(\mathcal{W})/\delta )}
\end{equation}
 is an invariant probability measure for the subshift of finite type $( \mathcal{X}_{\overline{f}}, \sigma_{\overline{f}})$ and corresponds to the equilibrium state of a H\"older potential $\phi=\phi_{\overline{f}}:\mathcal{X}_{\overline{f}} \to \R$ (Lemma \ref{12_jun_2015_Lemma}). Again, we can do the same by replacing $\overline{f}$ with $\underline{f}.$ Notice that for a given roof function $g,$ the measure with tilde
$\tilde{\mu}^{g}$ is a discrete version of the measure $\frac{\mu\times Leb}{\int gd\mu}.$\\

Applying (\ref{10_6_2015_caso_discreto}) to the subshift of finite type that we have constructed we obtain
\[
\lim_{n\to\infty}\frac{R_{\text{Discrete}}(\tilde{\mu}^{\overline{f}},\mathcal{I}_n\times\{0\},\mathcal{X}_{\overline{f}})}{\tilde{\mu}^{\overline{f}}(\mathcal{I}_n\times\{0\})}=\gamma(z),\]
and the same can be done for $\underline{f}.$\\

By definition we have:
\begin{enumerate}
\item for $n\in\N,$ if $k\in\N$ and $\underline{f}=\underline{f}_{m,\delta},$ then 
\[K_{\text{Discrete}}(\tilde{\mu}^{\underline{f}},k,\mathcal{I}_{n} \times \{0\},\mathcal{X}_{\underline{f}})=K(\mu^{\underline{f}},\delta k,\mathcal{I}_{n} \times \{0\},\Lambda_{\underline{f}});\]
\item if $k\in\N$ and $\overline{f}=\overline{f}_{m,\delta},$ then 
\[K_{\text{Discrete}}(\tilde{\mu}^{\overline{f}},k,\mathcal{I}_{n} \times \{0\},\mathcal{X}_{\overline{f}})=K(\mu^{\overline{f}},\delta k,\mathcal{I}_{n} \times \{0\},\Lambda_{\overline{f}});\]
\item $K(\mu^{f},t,\mathcal{I}_{n} \times \{0\},\Lambda_f)$ is decreasing in $t.$
\end{enumerate}
From this we have that independently of $n\in\N$ and for any $t\in \R^{>\delta}$
\begin{equation}\label{ineq1.1}  
K_{\text{Discrete}}(\tilde{\mu}^{\underline{f}},\left\lceil t/\delta\right\rceil,\mathcal{I}_{n}\times\{0\},\mathcal{X}_{\underline{f}})\leq K(\mu^{\underline{f}},t,\mathcal{I}_{n}\times\{0\},\Lambda_{\underline{f}})
\end{equation}
and
\begin{equation}\label{ineq1.3}  
K(\mu^{\overline{f}},t,\mathcal{I}_{n}\times\{0\},\Lambda_{\overline{f}}) \leq K_{\text{Discrete}}(\tilde{\mu}^{\overline{f}},\left\lfloor t/\delta\right\rfloor,\mathcal{I}_{n}\times\{0\},\mathcal{X}_{\overline{f}}).
\end{equation}

We will need the following inequality 
\begin{equation}\label{ineq1.2}  
K(\mu^{\underline{f}},t,\mathcal{I}_{n}\times\{0\},\Lambda_{\underline{f}}) +\log \frac{1}{2} \leq K(\mu^{f},t,\mathcal{I}_{n}\times\{0\},\Lambda_{f})\leq K(\mu^{\overline{f}},t,\mathcal{I}_{n}\times\{0\},\Lambda_{\overline{f}})+\log 2.
\end{equation}

In order to prove it, let consider the inclusions 
$$
 \begin{aligned} 
&\mathcal{A}:=\left\{ (x,s')\in\Lambda_{\underline{f}}:\Phi_{\underline{f}}^{s}(x,s')\notin \mathcal{I}_{n}\times\{0\},0\leq s\leq t\right\} \cr
&\subseteq \left\{ (x,s')\in\Lambda_{f}:s'<\underline{f}(x),\Phi_{f}^{s}(x,s')\notin \mathcal{I}_{n}\times\{0\},0\leq s\leq t\right\}\cr
&\subseteq \left\{ (x,s')\in\Lambda_{f}:\Phi_{f}^{s}(x,s')\notin \mathcal{I}_{n}\times\{0\},0\leq s\leq t\right\}=:\mathcal{V}.
 \end{aligned}
 $$ 
Then $ \mu^{\underline{f}}(\mathcal{A}) \int \underline{f} d\mu\leq  \mu^f(\mathcal{V}) \int f d\mu $ and 
$\log(\mu^{\underline{f}}(\mathcal{A}))+\log(\int \underline{f} d\mu) \leq  \log( \mu^f(\mathcal{V}))+\log(\int f d\mu).$ Thus by definition $K(\mu^{\underline{f}},t,\mathcal{I}_{n} \times \{0\},\Lambda_{\underline{f}})+\log (\int \underline{f} d\mu)\leq K(\mu^f,t,\mathcal{I}_{n} \times \{0\},\Lambda_f)+\log (\int f d\mu).$ 
We chose $m$ and $\delta$ so that $2\delta+\eta(m)<0.5\int fd\mu,$ then $\underline{f}=\underline{f}_{m,\delta}$ satisfies 
\[
\int \underline{f}d\mu \geq \int f d\mu-\eta(m)-2\delta\geq \frac{\int f d\mu}{2}
\]
and $\overline{f}=\overline{f}_{m,\delta}$ satisfies 
\[
\int \overline{f}d\mu \leq \int f d\mu+\eta(m)+2\delta\leq 2\int f d\mu.
\]
From this is clear that 
\[
K(\mu^{\underline{f}},t,\mathcal{I}_{n} \times \{0\},\Lambda_{\underline{f}})+\log \frac{1}{2}\leq K(\mu^{\underline{f}},t,\mathcal{I}_{n} \times \{0\},\Lambda_{\underline{f}})+\log \frac{\int \underline{f} d\mu}{\int f d\mu}
\leq K(\mu^f,t,\mathcal{I}_{n} \times \{0\},\Lambda_f). 
\]
The second inequality in (\ref{ineq1.2}) is completely analogous, in this case we obtain 
\[K(\mu^f,t,\mathcal{I}_{n} \times \{0\},\Lambda_f)\leq
K(\mu^{\underline{f}},t,\mathcal{I}_{n} \times \{0\},\Lambda_{\underline{f}})+\log \frac{\int \overline{f} d\mu}{\int f d\mu}\leq K(\mu^{\underline{f}},t,\mathcal{I}_{n} \times \{0\},\Lambda_{\underline{f}})+\log 2.
\]

In the next inequality we will use this identity: 
\[
\tilde{\mu}^{\overline{f}}(\mathcal{I}_n\times \{0\})= \frac{\delta}{\int \overline{f} d\mu} \mu(\mathcal{I}_n).
\]
For all $t\in\R^{>\delta},$
$$
\begin{aligned}
&\frac{1}{\mu(\mathcal{I}_{n})}\frac{1}{t}K(\mu^{f},t,\mathcal{I}_{n}\times\{0\},\Lambda_f)\cr
&\leq \frac{1}{\mu(\mathcal{I}_{n})}\frac{1}{t}K_{\text{Discrete}}(\tilde{\mu}^{\overline{f}},\left\lfloor t/\delta\right\rfloor,\mathcal{I}_{n}\times\{0\},\mathcal{X}_{\overline{f}})+\frac{\log2}{\mu(\mathcal{I}_{n})t}\cr
&\leq\frac{\left\lfloor t/\delta\right\rfloor }{\left[ t/\delta\right]}\frac{1}{\tilde{\mu}^{\overline{f}}(\mathcal{I}_{n}\times\{0\})\int\overline{f}d\mu}\frac{1}{\left\lfloor t/\delta\right\rfloor }K_{\text{Discrete}}(\tilde{\mu}^{\overline{f}},\left\lfloor t/\delta\right\rfloor,\mathcal{I}_{n}\times\{0\},\mathcal{X}_{\overline{f}})+\frac{\log2}{\mu(\mathcal{I}_{n})t}.
\end{aligned}
$$
In the last inequality, taking $\limsup_{t\to\infty}$ on both sides, then letting $n$ tend to infinity, and finally multiplying by $-1,$ allows us to write
\begin{equation}\label{eq1} 
\lim_{n\to\infty}\frac{ R(\mu^f,\mathcal{I}_n\times \{0\},\Lambda_f)}{\mu(\mathcal{I}_{n})}\geq \gamma(z) \frac{1} {\int\overline{f}d\mu}.    
\end{equation}
Similarly we obtain
\begin{equation}\label{eq2} 
\lim_{n\to\infty}\frac{R(\mu^f,\mathcal{I}_n\times \{0\},\Lambda_f)} {\mu(\mathcal{I}_{n})} \leq \gamma(z) \frac{1} {\int\underline{f}d\mu}.   
\end{equation}
Taking $f^*=\overline{f}$ or $\underline{f},$ by definition we have 
\begin{equation}\label{ineq2}
\int |f-f^*|d\mu\leq 2\delta+\eta(m). 
\end{equation}
This combined with inequalities (\ref{eq1}),(\ref{eq2}) and the fact that $\delta$ can be taken arbitrarily small, and $m$ arbitrarily large concludes the proof.
   \end{proof}
 
\section{Appendix}
 
 The following claim used in the proof of Theorem \ref{theoremA} is well known, however we include a demonstration for completeness.
 
\begin{claim}\label{12_jun_2015_Lemma}
The probability measure
\[
\tilde{\mu}^{\overline{f}}:=\frac{1}{\int \overline{f} d\mu} \sum_{\mathcal{W}\in \xi_m} \mu|_{\mathcal{W}}
\times \delta  d\N|_{[0,\overline{f}(\mathcal{W})/\delta )}
\]
 is an invariant probability measure for the subshift of finite type $( \mathcal{X}_{\overline{f}}, \sigma_{\overline{f}})$ and corresponds to the equilibrium state of a H\"older potential $\phi=\phi_{\overline{f}}:\mathcal{X}_{\overline{f}} \to \R.$
\end{claim}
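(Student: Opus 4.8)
The plan is to verify in turn the three assertions hidden in the statement: that $\tilde{\mu}^{\overline{f}}$ is a probability measure, that it is $\sigma_{\overline{f}}$-invariant, and that it is the equilibrium state (equivalently, the Gibbs measure) of some H\"older potential on $\mathcal{X}_{\overline{f}}$. First I would compute the total mass: summing $\delta\cdot d\N|_{[0,\overline{f}(\mathcal{W})/\delta)}$ against $\mu|_{\mathcal{W}}$ over $\mathcal{W}\in\xi_m$ gives $\frac{1}{\int\overline{f}d\mu}\sum_{\mathcal{W}}\mu(\mathcal{W})\,\overline{f}(\mathcal{W})$, which is exactly $1$ since $\overline{f}$ is constant on each $m$-cylinder $\mathcal{W}$ (by its definition as a function of $\sup_{y\in[x]_m}f(y)$) and $\sum_{\mathcal{W}}\mu(\mathcal{W})\overline{f}(\mathcal{W})=\int\overline{f}d\mu$. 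Nonnegativity is immediate, so $\tilde{\mu}^{\overline{f}}$ is a probability measure.

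For invariance I would use the two-case description of the transition matrix $A_{\overline{f}}$. A point of $\mathcal{X}_{\overline{f}}$ is a sequence of symbols $([x]_m,k\bmod \overline{f}([x]_m)/\delta)$, and $\sigma_{\overline{f}}=\Phi_{\overline{f}}^{\delta}$ either increments $k$ (case $C_1$, an "interior" step within the same tower of height $\overline{f}(\mathcal{W})/\delta$) or, when $k$ has reached the top, moves to a cylinder $\mathcal{W}'$ obtained by shifting $\mathcal{W}$ one coordinate (case $C_2$). So $\tilde{\mu}^{\overline{f}}$ is precisely the standard "suspension-type" measure $\mu\times(\text{counting})$ normalised over the tower, and checking $\tilde{\mu}^{\overline{f}}(\sigma_{\overline{f}}^{-1}E)=\tilde{\mu}^{\overline{f}}(E)$ reduces, on cylinders of $\mathcal{X}_{\overline{f}}$, to two bookkeeping identities: for interior levels $\sigma_{\overline{f}}^{-1}$ of level $(\mathcal{W},j+1)$ is level $(\mathcal{W},j)$ and $\mu$ is unchanged; for the bottom level $(\mathcal{W}',0)$, its preimage is the union over $\mathcal{W}\subset\sigma^{-1}\mathcal{W}'$ of the top levels, and $\sigma$-invariance of $\mu$ together with $\sum_{\mathcal{W}\subset\sigma^{-1}\mathcal{W}'}\mu(\mathcal{W})=\mu(\mathcal{W}')$ gives equality. (A short argument is needed that the $\delta$-discretisation is consistent, i.e. $\overline{f}/\delta$ is a positive integer on each cylinder; this holds because $\overline{f}$ was defined as an integer multiple of $\delta$.)

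The last and main point is identifying $\tilde{\mu}^{\overline{f}}$ as a Gibbs measure. By the characterisation recalled after (\ref{23_09_2015}) — a measure is an equilibrium state iff it satisfies the Gibbs inequalities for some H\"older $\phi$ and some $P$ — it suffices to exhibit constants $c_1',c_2'>0$ and $P'\geq 0$ and a H\"older $\phi$ on $\mathcal{X}_{\overline{f}}$ with $c_1'\leq \tilde{\mu}^{\overline{f}}([y]_N)/\exp(-P'N+\sum_{i=0}^{N-1}\phi(\sigma_{\overline{f}}^i y))\leq c_2'$. An $N$-cylinder in $\mathcal{X}_{\overline{f}}$ is, up to a factor bounded between $1$ and $\max\overline{f}/\delta$, the product of an $M(N)$-cylinder $[x]_{M(N)}$ in $\mathcal{X}$ with a single level, where the word length $M(N)$ and $N$ are comparable because $\overline{f}$ is bounded above and below (here I use $f>\epsilon$, so $\overline{f}\geq\epsilon$, and $f$ Lipschitz bounded, so $\overline{f}$ is bounded). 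Hence $\tilde{\mu}^{\overline{f}}([y]_N)\asymp \mu([x]_{M(N)})\cdot\delta/\int\overline{f}d\mu$, and plugging in the Gibbs bounds (\ref{23_09_2015}) for $\mu$ with potential $\varphi$ and pressure $P$ shows $\tilde{\mu}^{\overline{f}}([y]_N)\asymp \exp(-P\,M(N)+\sum_{j}\varphi(\sigma^j x))$. The natural candidate is then the normalised Abramov-type potential: $\phi$ locally constant equal to $\delta$ times (something involving $\varphi$ and $P$) on interior levels and carrying the full Birkhoff weight $\sum_{j=0}^{\overline{f}(\mathcal{W})/\delta-1}(\varphi-P)(\sigma^j\cdot)$ distributed across the tower over $\mathcal{W}$, with $P'$ chosen so the pressure vanishes. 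The main obstacle — and the step I would spend the most care on — is making this potential genuinely well-defined and H\"older on $\mathcal{X}_{\overline{f}}$: one must check that two $\mathcal{X}_{\overline{f}}$-points which agree on a long prefix correspond to $\mathcal{X}$-points agreeing on a correspondingly long prefix (so $\phi$ has exponentially decaying variations, inherited from $\varphi\in\mathcal{F}$ and from $f$ being Lipschitz, which controls how the tower heights vary), and that the discretisation does not destroy the mixing/aperiodicity needed for the Gibbs characterisation to apply. Once $\phi$ is in hand, the Gibbs inequalities above, combined with uniqueness of the equilibrium state, finish the proof.
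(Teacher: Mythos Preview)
Your proposal is correct and structurally matches the paper's: total mass one, invariance from the tower bookkeeping, then the Gibbs property inherited from $\mu$ via the characterisation (\ref{23_09_2015}). Two differences are worth noting. For invariance, the paper checks the Kolmogorov consistency relations (summing the measure of a cylinder over one appended or prepended symbol) rather than your direct preimage computation; these are of course equivalent. For the H\"older potential, you reach for an Abramov-type construction spreading $\varphi-P$ across the levels of each tower, whereas the paper takes a more direct route: it defines a projection $\tilde{\pi}:\mathcal{X}_{\overline{f}}\to\mathcal{X}$ that reads off the base sequence from the first symbol and the successive indices where the level coordinate returns to zero, and sets $\tilde{\varphi}:=\varphi\circ\tilde{\pi}$. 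The H\"older estimate is then a one-line consequence of the observation that $d(\overline{x},\overline{y})\le\theta^{k\|f\|/\delta}$ forces $d(\tilde{\pi}(\overline{x}),\tilde{\pi}(\overline{y}))\le\theta^{mk}$, so $\tilde{\varphi}$ is $(\alpha\delta m/\|f\|)$-H\"older; this sidesteps the ``main obstacle'' you flag. Your distributed potential would make the match between Birkhoff sums on the tower and on the base cleaner in the Gibbs verification, at the cost of a fussier H\"older check; the paper's pullback is simpler to define but its Gibbs computation is correspondingly terser. Either route closes the argument.
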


\begin{proof}
We introduce some notation. For $n\in \N$ we define the set of allowed words of length $n,$ $\mathcal{X}_{n}:=\{x_{[0,n)}:=x_0x_1\ldots x_{n-1} :x\in\mathcal{X}\}.$
In what follows we take $m$ and $\delta$ fixed in the definition of $f^*=\overline{f}$ or $\underline{f}.$
We define the function $\tilde{\pi}=\tilde{\pi}_{m,\delta}:\mathcal{X}_{f^*}\to \sigma$ so that the image of $$\overline{x}=(x_0,l_0),(x_1,l_1),\ldots, (x_n,l_n), \ldots$$ is given by $\tilde{\pi}(\overline{x})=x_{i_0} x_{i_1} x_{i_2}\ldots$ where $i_0=0$ and for $n\in\N,$ $i_n=\min\{k\in\Z^{>i_{n-1}}:l_k=0\}.$ We extend the definition of $\tilde{\pi}$ to the case of finite sequences and given 
 $$
 w=\overline{x}_{[0,k)}=(x_0,l_0),\ldots,(x_{k-1},l_{k-1})
 $$
 for some $k\in\N$ where $\overline{x}\in \mathcal{X}_{f^*},$ we define $\#w:=|\{n\in \langle 1, k-1 \rangle:l_n=0\}|+1.$\\
 
By definition, given $\overline{x}\in \mathcal{X}_{f^*}, i,j,k\in \Z^{\geq 0}$ with $i<j$ and $w=\overline{x}_{[i,j)}$ we have that 
$$
\tilde{\mu}^{f^*}([w]_k^{k+j-i})=\frac{\delta}{\int f^* d\mu}\mu([\tilde{\pi}(w)]_{\#w+1}).
$$
This can be seen as an alternative way to write the same measure defined in (\ref{measure_name_27_april_2015}). 
  
For $f^*=\overline{f}$ or $\underline{f}$ we need to prove that the measure
\[
\tilde{\mu}^{f^*}=\frac{1}{\int f^* d\mu} \sum_{w\in \xi_m} \mu|_{w}
\times \delta  d\N|_{[0,f^*(w)/\delta )} \] is an invariant probability measure for the subshift of finite type $( \mathcal{X}_{f^*}, \sigma_{f^*})$ and corresponds to the equilibrium state of a H\"older potential.\\

From a corollary of the Kolmogorov consistency theorem on a subshift of finite type $\mathcal{X}\subset \langle 1,a \rangle^{\Z_+},$ where $a\in\Z^{\geq 2},$ the set of $\sigma$ invariant probability measure is identified one-to-one with the set of maps $\mu:\B_{\mathcal{X}}\to\R^{>0}\cup\{0,\infty\}$ such that 
\begin{equation}\label{21_april_2015_1}
\sum_{s=1}^{a}\mu([s]_1)=1
\end{equation}
and for all $x\in\mathcal{X},$ for all integers $i,j,k\in \Z^{\geq 0}$ with $i<j$ we have for $w=x_{[i,j)}$
\begin{equation}\label{21_april_2015_2}
\mu ([w]_k^{k+j-i})=\sum_{s=1}^{a} \mu([w,s]_k^{k+j-i+1})
\end{equation} 
and 
\begin{equation}\label{21_april_2015_3}
\mu ([w]_{k+1}^{k+j-i+1})=\sum_{s=1}^{a} \mu ([s,w]_k^{k+j-i+1}).
\end{equation} 

In what follows let consider $m$ and $\delta$ fixed in the definition of $f^*=\overline{f}$ or $\underline{f}.$\\

Then, for the first part of the proof we need to check that $\tilde{\mu}^{f^*}$ satisfies (\ref{21_april_2015_1}), (\ref{21_april_2015_2}) and (\ref{21_april_2015_3}). 
We start by proving that $\tilde{\mu}^{f^*}$ satisfies (\ref{21_april_2015_1}), indeed
$$
 \begin{aligned} 
\sum_{[\overline{y}]_1:\overline{y}\in \mathcal{X}_{f^*}} \tilde{\mu}^{f^*}([\overline{y}]_1) &= \sum_{C\in \xi_m}\sum_{i=0}^{f^*(C)/\delta-1} \frac{\delta \mu(C)}{\int f^* d\mu}=\frac{1}{\int f^* d\mu}\sum_{C\in \xi_m}\mu(C)f^*(C)=1.
 \end{aligned} 
$$
In order to prove (\ref{21_april_2015_2}) and (\ref{21_april_2015_3}), let us suppose that $\overline{x}\in\mathcal{X}_{f^*}$ and $i,j,k\in\Z^{\geq 0}$ with $i<j.$ Denote $w=\overline{x}_{[i,j)}$ where 
$\overline{x}_{[i,j)}=(x_0,l_0),\ldots,(x_{j-1},l_{j-1}),$ and for shorter notation define also $v(x)=\frac{f(x)}{\delta}$  for $x\in \mathcal{X}_m.$ We have that:
$$
 \begin{aligned} 
&\sum_{ \begin{subarray}{c} (x,l):\\ x\in\mathcal{X}_m,l\in [0,v(x)) \end{subarray}} \tilde{\mu}^{f^*}\left(\left[w,(x,l)\right]_k^{j-i+k+1}\right) \cr
&=\begin{cases}
\sum_{C \in \xi_m}\frac{\delta}{\int f^* d\mu}\mu\left([\tilde{\pi}(w),C]_{k}^{k+\#w+2}\right) &\mbox{ if }1+l_{j-1}=v(x_{j-1})\\
\frac{\delta}{\int f^* d\mu}\mu( [\tilde{\pi}(w)]_{\#w+1}) &\mbox{ otherwise } 
\end{cases}\\
&=\frac{\delta}{\int f^* d\mu}\mu([\tilde{\pi}(w)]_{\#w+1})\cr
&=\tilde{\mu}^{f^*}([w]_k^{k+j-i})
 \end{aligned} 
$$
from which (\ref{21_april_2015_2}) follows; and
$$
 \begin{aligned} 
\sum_{\begin{subarray}{c} (x,l):\\ x\in\mathcal{X}_m,l\in [0,v(x)) \end{subarray}} \tilde{\mu}^{f^*}\left(\left[(x,l),w\right]_k^{j-i+k+1}\right)
&=\sum_{x\in\mathcal{X}_m} \frac{\delta}{\int f^* d\mu} \mu([x,\tilde{\pi}(w)]_k^{k+\#w+2})\cr 
&=\frac{\delta}{\int f^* d\mu}\mu([\tilde{\pi}(w)]_k^{k+\#w+1})\cr
&= \tilde{\mu}^{f^*}([w]_k^{j-i+k})
 \end{aligned} 
$$
hence (\ref{21_april_2015_3}).\\

To prove that $\tilde{\mu}^{f^*}$ is the equilibrium state of a H\"older potential we will find explicitly a H\"older potential $\tilde{\varphi}=\tilde{\varphi}_{m,\delta}$ associated to it. Suppose that $\mu$ is the equilibrium state of an $\alpha$-H\"older potential $\varphi,$ then the candidate is $\tilde{\varphi}(\overline{x})=\tilde{\varphi}(\tilde{\pi}(\overline{x})).$\\

We observe that $d(\overline{x},\overline{y})\leq \theta ^{k\left\Vert f\right\Vert/\delta}$ implies 
$d(\tilde{\pi}(\overline{x}),\tilde{\pi}(\overline{y}))\leq \theta ^{mk}$ and 
\[
d(\tilde{\pi}(\overline{x}),\tilde{\pi}(\overline{y}))^{\left\Vert f\right\Vert/\delta}\leq d(\overline{x},\overline{y})^m.
\]
Therefore
\[
\sup_{\overline{x}\neq \overline{y}}\frac{d\left(\tilde{\varphi}(\overline{x}),\tilde{\varphi}(\overline{y}) \right)}{d(\overline{x},\overline{y})^{\alpha \delta m/\left\Vert f\right\Vert}} \leq \sup_{\overline{x}\neq \overline{y}}\frac{d\left(\varphi(\tilde{\pi}(\overline{x})),\varphi(\tilde{\pi}(\overline{y})) \right)}{d(\tilde{\pi}(\overline{x}),\tilde{\pi}(\overline{y}))^{\alpha}}<\infty
\]
because we assumed that $\varphi$ is $\alpha$-H\"older. This proves that $\tilde{\varphi}$ is $\frac{\alpha \delta m}{\left\Vert f\right\Vert}$-H\"older.

To prove that $\tilde{\mu}^{f^*}$ is an equilibrium state it is enough (see \cite{Rufus}) to check that it is Gibbs, i.e. it satisfies (\ref{23_09_2015}). For notational convenience, given $n\in\N$ a subshift of finite type $(\mathcal Y,\sigma)$ and a function $\phi:\mathcal Y\to\R,$ let us denote $S^{\sigma}_n\varphi(y)=\sum_{k=0}^{n-1}\varphi(\sigma^k y).$
Suppose $m\delta /\left\Vert f\right\Vert=1,$ and for notational convenience call $s=\left\lfloor m\delta k/\left\Vert f\right\Vert\right\rfloor.$ We have the following bounds:
$$
 \begin{aligned} 
&\sup_{\overline{x}\in \mathcal{X}_{f^*}}\frac{\tilde{\mu}^{f^*} \left([\overline{x}_{[0,k)}]_k\right)}{\exp\{-Pk+S^{\sigma}_k \tilde{\varphi}(\overline{x})\}}\cr
&\leq \frac{\delta}{\int f^* d\mu} \sup_{x\in\mathcal{X}}\frac{\mu ( [ x_{[0,s+1)} ]_{s+1})}{\exp\{-P s/[m\delta /\left\Vert f\right\Vert ]+S^{\sigma}_{s}\varphi(x) /[m\delta /\left\Vert f\right\Vert ] \}}\cr
&=\frac{\delta}{\int f^* d\mu} \sup_{x\in\mathcal{X}}\frac{\mu ([x_{[0,s+1)}]_{s+1})}{\exp\{-Ps+S^{\sigma}_{s}\varphi(x) \}}\cr
&\leq \frac{\delta c_2}{\int f^* d\mu}, 
 \end{aligned} 
$$
and
$$
 \begin{aligned} 
\sup_{\overline{x}\in \mathcal{X}_{f^*}} \frac{\tilde{\mu}^{f^*} \left([ \overline{x}_{[0,k)} ]_{k}\right) }{\exp\{-Pk+S^{\sigma}_k \tilde{\phi}(\overline{x})\}}&\geq \frac{\delta}{\int f^* d\mu} \sup_{x\in\mathcal{X}}\frac{\mu\left([x_{[0,k)}]_k\right)}{\exp\{-P k+S^{\sigma}_k\varphi(x) \}}\cr
&\geq \frac{\delta c_1}{\int f^* d\mu}. 
 \end{aligned} 
$$
This concludes the demonstration.
\end{proof}

\cleardoublepage

\end{document}